\documentclass{amsart}
\usepackage{amsmath,amssymb,amsthm,amscd,graphicx,color,accents}

\usepackage[colorlinks=true,linkcolor=blue,citecolor=blue,urlcolor=blue]{hyperref}

\newtheorem{thm}{Theorem}[section]
\newtheorem{prop}[thm]{Proposition}
\newtheorem{lem}[thm]{Lemma}

\theoremstyle{definition}

\theoremstyle{remark}

\numberwithin{equation}{section}

\newcommand{\R}{\mathbb{R}}  % The real number field.
\newcommand{\C}{\mathbb{C}}  % The real number field.

\date{\today}

\begin{document}

\title{GENERALIZED WATSON TRANSFORMS II: THE COMPLEMENTARY SERIES OF $\boldsymbol{GL(2,\R)}$}

\author{Qifu Zheng}
\address{Department of Mathematics and Statistics, The College of New Jersey, Ewing, NJ 08618}
\email{zheng@tcnj.edu}
\subjclass[2010]{Primary 22E30, 43A32, 44A15; Secondary 43A65, 42A38}
\keywords{Complementary series, unitary representations, Watson transform.}
%\thanks{This research was partially supported by National Science Foundation grant DMS-9501191.}

\begin{abstract}
We apply the theory of generalized Watson transforms developed in \cite{zheng00} to construct the complementary series of $GL(2,\R)$.
\end{abstract}

\textbf{\maketitle}

\parindent=25pt
\pagestyle{myheadings}

\markboth{\hfill\sc{Q. Zheng}\hfill}{\hfill\sc{Generalized Watson Transforms, II: The Complementary Series of }$GL(2,\R)$\hfill}

\section{Introduction}
\label{Intro}

This article is the second in a series of articles in which we develop the theory of generalized Watson transforms and make applications of those results to the representation theory of the general linear groups over $\R$.  It is well known \cite{bargmann,knapp} that the irreducible representations of $GL(2,\R)$, the general linear group of $2 \times 2$ real matrices, are classified according to three distinct constructions: (1) The principal series are usually constructed by unitary induction from its parabolic subgroup. (2) The complementary series are constructed by a form of analytic continuation from the principal series. (3) The (relative) discrete series are usually constructed in spaces of holomorphic functions on the unit disk or upper half complex plane.  

On the other hand, by applying the results developed in [3], we can obtain all three series using the method of generalized Watson transforms. That this method is able to achieve these results is due to the fact that the group $G = GL(2,\R)$ is generated by its upper triangular (Borel) subgroup $Q$ and the Weyl reflection, 
\begin{equation}
\label{pmatrix}
p = \begin{bmatrix}
 0 & 1 \\
 -1 & 0  \\
 \end{bmatrix}
 \end{equation}
then, any irreducible unitary representation $\pi$ of $G$ is determined by its restriction to $Q$ and $p$, which in fact corresponding to a generalized Watson transform.  In this paper, we will illuminate this approach by applying the method of generalized Watson transforms to construct the complementary series of $G$, and in a subsequent article \cite{zheng_in_prep}, we will use the generalized Watson transform method to construct unitary representations of higher rank groups.
 
 This paper is organized as follows: In Section \ref{Review}, we review briefly some concepts and
 theorems related to generalized Watson transforms from \cite{zheng00}.  In Section \ref{pitt}, we will describe the subgroups of $G$ and its non-unitary representations realized on the Hilbert space $L^2(\R, (1+ x^2)^s dx)$.  Then, in Section \ref{Subgroup} we will use Pitt's theorem \cite{pitt} to realize the representations on the space $H_s = L^2(\R, |x|^{-s}dx)$ where $0 < s < 1$.  Finally, in Section \ref{Proof} we will show that the representations realized on $H_s$ in Section 4 are unitary.

 \section{Some remarks on the generalized Watson transforms}
 \label{Review}
 
% In this section, we will give a short review of definitions and theorems related to generalized Watson transforms. Readers can find more details in [3]. 

Let $G_0$ be a topological group, $R$ and $L$ be unitary representations of $G_0$ on a Hilbert space $H$, and let $I$ denote the identity operator on $H$.  A unitary operator $W$ that intertwines $R$ and $L$ is called a {\it generalized Watson transform with respect to $R$ and $L$} if $W^2 = I$.  The operator $W$ is called a {\it generalized skew Watson transform with respect to $R$ and $L$} if $W^2 = -I$.   The results in \cite{zheng00} provide several theorems on the construction of generalized Watson transforms.  Here, we list one corollary that is needed in the proof of the unitarity of the complementary series.

 \begin{prop} {\rm(Zheng \cite{zheng00})}
 \label{RandL}
 Suppose that $G_0$ is Abelian, let $R$ be a unitary representation of $G_0$ on a Hilbert space $H$, and set $L( g ) = R(g^{-1})$ for all $g \in G_0$.  For $\phi\in H$, suppose that $\phi^{\circ}=\left\{R(g)\phi : g\in G_0\right\}$ spans a dense subspace of $H$. Then there exists a generalized Watson transform $W$ on $H$ with respect to $R$ and $L$ such that $W\phi = \pm\phi$ if and only if $\langle\phi|R(g)\phi\rangle $ is real for all $g \in G_0$. 
 \end{prop}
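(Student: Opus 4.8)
The plan is to reduce the entire statement to a single symmetry property of the matrix coefficient $f(g) := \langle \phi \mid R(g)\phi\rangle$. Since $R$ is unitary we have $R(g)^* = R(g^{-1})$, and conjugate symmetry of the inner product gives
\[
\overline{f(g)} = \langle R(g)\phi \mid \phi\rangle = \langle \phi \mid R(g)^{*}\phi\rangle = \langle \phi \mid R(g^{-1})\phi\rangle = f(g^{-1})
\]
for every $g \in G_0$. Hence the reality hypothesis ``$f(g)\in\R$ for all $g$'' is \emph{equivalent} to the evenness condition $f(g) = f(g^{-1})$ for all $g$, and I will exploit this equivalence in both directions.

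For the forward implication, suppose a generalized Watson transform $W$ exists with $W\phi = \varepsilon\phi$, $\varepsilon = \pm 1$. Using that $W$ is unitary (so $\langle W u\mid W v\rangle = \langle u\mid v\rangle$) together with the intertwining relation $WR(g) = L(g)W = R(g^{-1})W$, I compute $f(g) = \langle W\phi\mid WR(g)\phi\rangle = \langle \varepsilon\phi\mid R(g^{-1})\varepsilon\phi\rangle = f(g^{-1})$. By the equivalence above, $f$ is real-valued, as required.

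For the converse, assume $f$ is real (equivalently even) and build $W$ explicitly from the orbit. Define $W$ on the linear span of $\phi^{\circ}$ by $W\big(\sum_i c_i R(g_i)\phi\big) = \sum_i c_i R(g_i^{-1})\phi$; this is the only formula compatible with $W\phi = \phi$ and the desired intertwining. The crux, and the step I expect to be the main obstacle, is to show this prescription is well defined and isometric on the span. Expanding both norms and using that $R$ is unitary and $G_0$ abelian, the relevant Gram entries are $\langle R(g_i)\phi\mid R(g_j)\phi\rangle = f(g_i^{-1}g_j)$ and $\langle R(g_i^{-1})\phi\mid R(g_j^{-1})\phi\rangle = f(g_i g_j^{-1})$. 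Because $G_0$ is abelian, $g_i g_j^{-1} = (g_i^{-1}g_j)^{-1}$, so evenness $f(h) = f(h^{-1})$ forces these two Gram matrices to coincide. Consequently the two norms are equal, which simultaneously yields well-definedness (a null combination maps to a null combination) and isometry.

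Once isometry is in hand the remaining steps are routine. Since $\phi$ is cyclic, the span of $\phi^{\circ}$ is dense, so $W$ extends by continuity to an isometry of $H$; its range is closed and contains all of $\phi^{\circ}$ (as $g$ ranges over $G_0$ so does $g^{-1}$), hence equals $H$, so $W$ is unitary. Evaluating on the spanning vectors gives $W^2 R(g)\phi = R(g)\phi$ and $WR(h)R(g)\phi = R(g^{-1}h^{-1})\phi = L(h)WR(g)\phi$, so by density $W^2 = I$ and $WR(h) = L(h)W$; thus $W$ is a generalized Watson transform with $W\phi = \phi$. Replacing $W$ by $-W$ (still unitary, still squaring to $I$, still intertwining) produces the case $W\phi = -\phi$, completing the ``$\pm$'' claim.
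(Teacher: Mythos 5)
Your argument is correct and complete. Note that the paper itself offers no proof of this proposition---it is quoted as Proposition~\ref{RandL} from the earlier paper \cite{zheng00}---so there is nothing in this source to compare against line by line; but your construction is exactly the natural one that result rests on. The two key observations are both in place: the identity $\overline{\langle\phi|R(g)\phi\rangle}=\langle\phi|R(g^{-1})\phi\rangle$, which converts the reality hypothesis into evenness of the matrix coefficient, and the Gram-matrix computation on the orbit, where commutativity of $G_0$ gives $g_ig_j^{-1}=(g_i^{-1}g_j)^{-1}$ and hence equality of the two Gram matrices. That yields well-definedness and isometry of $g\mapsto g^{-1}$ on the span of $\phi^{\circ}$ simultaneously, and the density hypothesis does the rest. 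The forward implication via $\langle W\phi|WR(g)\phi\rangle$ is also right, and the remark that $-W$ covers the sign $W\phi=-\phi$ correctly disposes of the ``$\pm$'' in the statement.
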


 \section {Subgroups of \texorpdfstring{$G$}{G} and its non-unitary representations}
 \label{Subgroup}
 
Denote the elements of $G$ by
$$
 g=\begin{bmatrix}
  a & c \\
  b & d  \\
  \end{bmatrix},
$$ 
 where $a, b, c, d \in\R$ and $ad - bc \ne 0$.  Let $Q$ be the full upper-triangular subgroup of matrices
\begin{equation}
 \label{qmatrix}
 q := q(a,c,d)=\begin{bmatrix}
   a & c \\
   0 & d  \\
   \end{bmatrix},
\end{equation}
 and $Q^t$ be the analogous full lower-triangular subgroup. Then $Q$ is the semi-direct product of the normal Abelian subgroup $N$ of unipotent matrices
\begin{equation}
 \label{nmatrix}
 n := n( c) = \begin{bmatrix}
   1 & c \\
   0 & 1  \\
   \end{bmatrix},
\end{equation}
$c \in \R$, and the diagonal subgroup $D$ of matrices
\begin{equation}
 \label{gammamatrix}
 \gamma(a, d)=
 \begin{bmatrix}
    a & 0 \\
    0 & d  \\
 \end{bmatrix},
\end{equation}
 with $a \ne 0, d\ne 0$. The matrix $p$ in (\ref{pmatrix}), 
% $$p=\begin{bmatrix}
%    0 & 1 \\
%    -1 & 0  \\
 %   \end{bmatrix},$$ 
 called the Weyl element, plays a special role in the representation theory of $G$.  Since $p^2 = -I$, the generalized Watson transforms are operators associated by representations to $p$. This explains the importance of generalized Watson transforms in the representation theory of $G$, and more generally, of reductive Lie groups.
 
 The following result is well known \cite{knapp}.  

 \begin{prop}
 \label{NandD}
 The subgroups $N$ and $D$ and the Weyl element $p$ generate the group $G$.
 \end{prop}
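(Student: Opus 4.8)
The plan is to reduce the statement to a Bruhat-type decomposition and then verify that decomposition by an explicit matrix computation. First I would observe that, since $Q = ND$ is the semidirect product of $N$ and $D$ as recorded in Section~\ref{Subgroup}, the subgroups $N$ and $D$ together already generate the full Borel subgroup $Q$. Consequently the proposition is equivalent to the assertion that $Q$ and the Weyl element $p$ generate $G$, and it is this reformulation that I would actually prove.

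The core step is to establish the decomposition $G = Q \cup QpQ$. Given $g = \begin{bmatrix} a & c \\ b & d \end{bmatrix} \in G$, I would split into two cases according to the lower-left entry $b$. If $b = 0$ then $g$ is upper triangular, so $g \in Q$ and there is nothing to do. If $b \neq 0$, I would seek $q_1, q_2 \in Q$ with $g = q_1 p q_2$. Writing $q_i = q(a_i, c_i, d_i)$ and multiplying out, one finds $q_1 p q_2 = \begin{bmatrix} -c_1 a_2 & a_1 d_2 - c_1 c_2 \\ -d_1 a_2 & -d_1 c_2 \end{bmatrix}$, which yields four scalar equations for the entries of $g$. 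The $(2,1)$ entry gives $-d_1 a_2 = b$, which forces $d_1, a_2 \neq 0$ (this is exactly where $b \neq 0$ enters); the $(1,1)$ and $(2,2)$ entries then determine $c_1$ and $c_2$, and the $(1,2)$ entry reduces to $a_1 d_2 = -\det(g)/b$. Since $\det(g) \neq 0$ and $b \neq 0$, this product is nonzero, so $a_1$ and $d_2$ can be chosen nonzero; hence admissible parameters exist and $g \in QpQ$.

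With the decomposition $G = Q \cup QpQ$ in hand, every element of $G$ is a product of elements of $Q$ together with the single element $p$, and since $N$ and $D$ generate $Q$, it follows that $N$, $D$, and $p$ generate $G$. This completes the argument.

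I expect the only real obstacle to be bookkeeping: one must confirm that the nonvanishing conditions $a_i \neq 0$ and $d_i \neq 0$ required for $q_1, q_2$ to lie in $Q$ are simultaneously satisfiable, and these are precisely guaranteed by the hypotheses $b \neq 0$ and $\det(g) \neq 0$. Alternatively, one could bypass the explicit computation by invoking the Bruhat decomposition of $GL(2,\R)$ directly, noting that $p$ and the standard permutation representative of the nontrivial Weyl group element differ by the diagonal matrix $\gamma(1,-1) \in D \subset Q$, so that $QpQ$ is exactly the big cell; the hands-on factorization above is essentially a self-contained proof of that decomposition in this low-rank case.
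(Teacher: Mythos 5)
Your proposal is correct and takes essentially the same route as the paper: the paper also splits on whether $b=0$ and, for $b\neq 0$, exhibits the explicit factorization $g = n(a/b)\,p\,\gamma(-b,-(\det g)/b)\,n(d/b)$, which is just one particular solution of the system of equations you derive for $g = q_1 p q_2$.
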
 
 
% \begin{proof}  
The proof of the above result rests on two identities. First, when $b = 0$,
$$g=\begin{bmatrix}
   a & c \\
   0 & d  \\
   \end{bmatrix}=q(a,c,d)=\gamma(a,d) \, n(\frac{c}{a});$$
   and when $b\ne 0$,
 $$g=\begin{bmatrix}
    a & c \\
    b & d  \\
    \end{bmatrix}=n(a/b) \, p \, \gamma(-b,-(\det{g})/b) \, n(d/b).
    $$
Therefore, any representation of the group $G$ is completely determined by its restrictions to $N,D$ and $p$. 
 
For $d \in \R$, let $\sigma(d)$ denote the sign of $d$; also, let $\epsilon$ equal 0 or 1.  Starting from the one-dimensional character of $Q^t$, 
 $$
 \tau_s\left(\begin{bmatrix}
    a & 0 \\
    b & d  \\
    \end{bmatrix}\right)=\sigma(d)^{\epsilon}|a|^{-(\Re(s)+1)/2}|d|^{(\Re(s)+1)/2}$$
we can obtain the bounded representation $\pi_s$, of $G$ on the weighted norm space $V_s= L^2 (\R,(1+x^2)^{\Re(s)}dx)$, defined by
 \begin{multline}
 \label{pis}
(\pi_s(g)f)(x) = (\sigma(\det g))^{\epsilon}(\sigma(\det(a +xb)))^{\epsilon} \\
\times |\det g|^{(\Re(s)+1)/ 2} |\det(a + xb)|^{ -( \Re(s)+1 )}f\left(\frac{c+xd}{a+xb}\right),
 \end{multline}
% where $|g| = |\det g |$ is the absolute value of the determinant of $g$. 
The representation $\pi_s$ on $V_s$ is unitary if and only if $s$ is pure imaginary, i.e., $\Re(s) = 0$.  Since in this article we concentrate on the complementary series, we will only consider the case in which $\epsilon = 0$ and $0 < s < 1$, and in that case, (\ref{pis}) becomes
\begin{equation}
\label{special}
  (\pi_s(g)f)(x) = |\det g|^{(s+1)/ 2} |\det(a + xb)|^{ -(s+1)}f\left(\frac{c+xd}{a+xb}\right)
\end{equation}
 
 \section{New realizations of non-unitary representations of \texorpdfstring{$G$}{G}}
 \label{pitt}
 
 In this section, we will make use of a well-known theorem of Pitt \cite{pitt} to realize the non-unitary representation  $\pi_s$ on the weighted norm space $H_s = L^2(\R, |x|^sdx)$ when $0 < s < 1$.  We first state a special case of Pitt's theorem and refer readers to Stein \cite{stein} for more general versions of Pitt's theorem.
 \begin{thm} 
 \label{pitthm}
 {\rm (Pitt \cite{pitt})} Let $s\in (0,1)$ and let $\hat{f}$ denote the Fourier transform of a function $f:\R\to\C$.  Then there exists a constant $C$ such that
 \begin{equation}
 \label{inequ}
 \int_{-\infty}^{\infty}|\hat{f}(x)|^2|x|^{-s}dx \le C\int_{-\infty}^{\infty}|\hat{f}(x)|^2|x|^{s}dx
 \end{equation}
 for every function $f \in H_s$ for which the integral on the right side of (\ref{inequ}) is convergent.
 \end{thm}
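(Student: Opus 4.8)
The estimate is Pitt's inequality in its diagonal $L^2$ form, and the plan is to prove it by diagonalizing simultaneously the dilation action and the Fourier transform through the Mellin transform on the multiplicative group $(0,\infty)$; the analytic background is in \cite{stein}. First I would split $f$ into its even and odd parts and pass to the half-line, on which the Fourier transform becomes the cosine transform $\mathcal F_c g(\xi)=\int_0^\infty g(x)\cos(\xi x)\,dx$ on the even part and the sine transform on the odd part. Because the weights $|x|^{\pm s}$ are invariant under $x\mapsto-x$, it suffices to bound each of these half-line transforms separately, and the two are handled in the same way.

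For a function $g$ on $(0,\infty)$ write $(\mathcal M g)(z)=\int_0^\infty g(x)\,x^{z-1}\,dx$. Two facts drive the argument. By the Mellin–Plancherel identity $\int_0^\infty|g(x)|^2x^{2c-1}\,dx=\tfrac1{2\pi}\int_{-\infty}^\infty|(\mathcal M g)(c+i\tau)|^2\,d\tau$, each power weight is turned into an $L^2$ norm along a vertical line, so the two weights in (\ref{inequ}) correspond to the lines $\Re z=(1+s)/2$ and $\Re z=(1-s)/2$. And because the cosine transform is a multiplicative convolution with kernel $\cos$, one has the exact factorization
\[
(\mathcal M\mathcal F_c g)(z)=\Gamma(z)\cos\!\Big(\tfrac{\pi z}{2}\Big)\,(\mathcal M g)(1-z),\qquad 0<\Re z<1,
\]
with the companion symbol $\Gamma(z)\sin(\pi z/2)$ for the sine transform. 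Thus, under $\mathcal M$, the transform is multiplication by the explicit symbol $\chi(z)=\Gamma(z)\cos(\pi z/2)$ followed by the reflection $z\mapsto1-z$, and the entire inequality collapses to the single bound $\sup_{\Re z=(1-s)/2}|\chi(z)|<\infty$.

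Establishing this multiplier bound is the crux of the argument and the step I expect to be the main obstacle. Two points must be verified. First, the hypothesis $0<s<1$ places the line $\Re z=(1-s)/2$ inside the open strip $0<\Re z<1$ on which $\chi$ is holomorphic; the convergence of the right-hand side of (\ref{inequ}) then guarantees that $\mathcal M g$ is square-integrable along the line $\Re z=(1+s)/2$ that feeds the reflected argument, which is what makes the shift of the Mellin contour legitimate. Second, $\chi$ must be bounded on this line, a Stirling estimate: along a vertical line the decay $|\Gamma(c+i\tau)|\sim\sqrt{2\pi}\,|\tau|^{c-1/2}e^{-\pi|\tau|/2}$ is cancelled exactly by the growth $|\cos(\tfrac{\pi}{2}(c+i\tau))|\sim\tfrac12 e^{\pi|\tau|/2}$, so that $|\chi(c+i\tau)|=O(|\tau|^{c-1/2})$, bounded precisely because $c=(1-s)/2<\tfrac12$ for $s\in(0,1)$. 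One may then take $C=\sup_\tau|\chi((1-s)/2+i\tau)|^2$; the equivalent forms of the inequality—according to whether $f$ or $\hat f$ carries the positive weight—all reduce to this same bound via Fourier inversion, $\widehat{\hat f}(x)=f(-x)$.
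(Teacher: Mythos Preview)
The paper does not supply a proof of this theorem: it is quoted as a known result attributed to Pitt \cite{pitt}, with a pointer to \cite{stein} for more general versions, and is then used as a black box in the proof of Lemma~4.2. So there is no argument in the paper to compare against.

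Your Mellin-transform proof is a standard and correct route to the diagonal $L^2$ Pitt inequality. Two remarks. First, the inequality as printed in the paper has $\hat f$ on both sides, which taken literally is false (set $g=\hat f$ and choose $g$ concentrated near the origin); the intended statement, visible from how it is applied immediately afterward, has $f$ on the right and $\hat f$ on the left, and that is what you prove. Second, your identification of the relevant Mellin lines and the Stirling cancellation is right: with $c=(1-s)/2$ one has $0<c<\tfrac12$ exactly for $s\in(0,1)$, and the symbol $\Gamma(z)\cos(\pi z/2)$ is bounded on $\Re z=c$ because $|\Gamma(c+i\tau)|\,|\cos(\tfrac{\pi}{2}(c+i\tau))|\asymp|\tau|^{c-1/2}$ as $|\tau|\to\infty$. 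The only point you might tighten is the justification for working with the Mellin transform on both lines simultaneously: one typically establishes the factorization and the bound first on a dense class (e.g.\ smooth compactly supported $g$ on $(0,\infty)$) and then extends by density, rather than appealing to a contour shift for general $g$.
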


By applying Proposition \ref{pitthm}, we can define in terms of $\pi_s$ a representation $\rho_s$ on $H_s$.  Indeed, denoting by $\mathcal{F}$ the Fourier transform, we have the following result.

 \begin{lem}
Define, for $g \in G$, the operator 
\begin{equation}
\label{rhos}
\rho_s(g) = \mathcal{F}\pi_s(g){\mathcal{F}}^{-1}.
\end{equation}
Then $\rho_s$ is a well-defined representation of $G$ on $H_s$, $0 < s < 1$.
 \end{lem}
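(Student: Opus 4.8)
The plan is to use the generation of $G$ by its subgroups to localize the analysis, and to treat the homomorphism property as a formal consequence of the fact that $\pi_s$ is already a representation. By Proposition~\ref{NandD}, $G$ is generated by $N$, $D$, and the Weyl element $p$, and as the discussion following that proposition records, a representation of $G$ is completely determined by its restrictions to these three pieces; so I would first compute $\rho_s$ on the generators directly from (\ref{special}). On $N$ the operator $\pi_s(n(c))$ is the translation $f(x)\mapsto f(x+c)$, whence $\rho_s(n(c))=\mathcal{F}\pi_s(n(c))\mathcal{F}^{-1}$ is multiplication by the unimodular function $\xi\mapsto e^{ic\xi}$; on $D$ the operator $\pi_s(\gamma(a,d))$ is a dilation composed with a positive scalar, whose Fourier conjugate is again a dilation times a scalar. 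Since the weight defining $H_s$ is a power of $|\,\cdot\,|$, it transforms predictably under dilation, and multiplication by a unimodular function is isometric; hence both of these operators are visibly bounded on $H_s$, and the generators $N$ and $D$ require no analytic input.

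The analytic content sits entirely at the Weyl element. A short computation from (\ref{special}) with $p$ as in (\ref{pmatrix}) gives $\pi_s(p)f(x)=|x|^{-(s+1)}f(-1/x)$, so $\rho_s(p)=\mathcal{F}\pi_s(p)\mathcal{F}^{-1}$ is the Fourier conjugate of this inversion. The key step is to show that $\rho_s(p)$ is a bounded operator on $H_s$, and this is exactly where Theorem~\ref{pitthm} is used: Pitt's inequality (\ref{inequ}) is the tool that controls the resulting weighted $L^2$ norm and yields a bound on $\|\rho_s(p)h\|_{H_s}$ by a constant multiple of $\|h\|_{H_s}$. Once boundedness is in hand on the generators, the homomorphism property follows formally, since $\pi_s$ is already a representation of $G$ on $V_s$: one has $\rho_s(g_1)\rho_s(g_2)=\mathcal{F}\pi_s(g_1)\mathcal{F}^{-1}\mathcal{F}\pi_s(g_2)\mathcal{F}^{-1}=\mathcal{F}\pi_s(g_1g_2)\mathcal{F}^{-1}=\rho_s(g_1g_2)$, provided the cancellation $\mathcal{F}^{-1}\mathcal{F}=I$ is legitimate where it is applied.

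The hard part will be making these two informal points rigorous at the level of domains. The operator $\pi_s(p)$ is singular and the weight defining $H_s$ is singular as well, so $\rho_s(g)$ in (\ref{rhos}) cannot simply be read as a composition of bounded maps $H_s\to V_s\to V_s\to H_s$: a scaling comparison shows the weights of $V_s$ and $H_s$ are not matched by $\mathcal{F}$, so $\mathcal{F}$ is not a topological isomorphism between these spaces and the naive conjugation argument breaks down. I would instead fix a dense core, the Schwartz space $\mathcal{S}(\R)$, which lies in both $V_s$ and $H_s$ and on which $\mathcal{F}$ and $\mathcal{F}^{-1}$ are mutually inverse; define $\rho_s(g)$ there, verify the relations and the identity $\mathcal{F}^{-1}\mathcal{F}=I$ on this core, and then extend each $\rho_s(g)$ to all of $H_s$ by continuity using the Pitt bound established for $\rho_s(p)$. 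Finally, strong continuity of $g\mapsto\rho_s(g)$ transfers from that of $\pi_s$, completing the verification that $\rho_s$ is a well-defined representation of $G$ on $H_s$.
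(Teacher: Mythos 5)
Your plan hinges on the claim that Pitt's inequality directly yields a bound $\|\rho_s(p)h\|_{H_s}\le C\|h\|_{H_s}$, and that is exactly where the argument breaks. Trace the composition: for $h\in H_s=L^2(\R,|x|^{-s}dx)$ one has $\|\mathcal{F}\pi_s(p)\mathcal{F}^{-1}h\|_{H_s}\le C\|\pi_s(p)\mathcal{F}^{-1}h\|_{L^2(|x|^{s}dx)}$ by Theorem \ref{pitthm}, and a change of variables shows $\pi_s(p)f(x)=|x|^{-(s+1)}f(-1/x)$ is an \emph{isometry} of $L^2(\R,|x|^{s}dx)$; but to close the loop you would need $\|\mathcal{F}^{-1}h\|_{L^2(|x|^{s}dx)}\le C\|h\|_{L^2(|x|^{-s}dx)}$, which is the \emph{reverse} of Pitt's inequality and is not supplied by Theorem \ref{pitthm} (the exponent has the wrong sign). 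In fact the boundedness --- indeed unitarity --- of $\rho_s(p)$ on $H_s$ is precisely the content of Theorem \ref{main}, which the paper proves by an entirely different mechanism (the reality of $\langle\phi_s|R(\gamma)\phi_s\rangle$ together with Proposition \ref{RandL} and the density Lemmas \ref{phi} and \ref{psis}); it cannot be obtained as a corollary of Pitt at this stage. A secondary problem is that your proposed core $\mathcal{S}(\R)$ is not invariant under $\pi_s(p)$: for Schwartz $f$ with $f(0)\neq 0$ one has $\pi_s(p)f(x)\sim f(0)|x|^{-(s+1)}$ as $|x|\to\infty$, so the extend-by-continuity-from-a-core scheme needs additional repair even where the operators are bounded.

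You are also proving more than the lemma asserts. The statement is only that $\rho_s(g)$ is \emph{well defined} as a map of $H_s$ and that $g\mapsto\rho_s(g)$ is a homomorphism; no boundedness is claimed here. The paper's route is correspondingly lighter: Pitt's inequality shows $\mathcal{F}\colon V_s\to H_s$ is continuous, a separate density argument (dilates of the Gaussians $\lambda,\mu$ together with uniqueness of the Laplace transform) shows $\mathcal{F}V_s=H_s$, and then $\rho_s(g)=\mathcal{F}\pi_s(g)\mathcal{F}^{-1}$ is a well-defined composition $H_s\to V_s\to V_s\to H_s$ of linear maps, with the homomorphism identity $\rho_s(g_1)\rho_s(g_2)=\rho_s(g_1g_2)$ holding on all of $H_s$ because $\mathcal{F}^{-1}\mathcal{F}$ is the identity on $V_s$ --- no generator decomposition and no core are needed. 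If you want to salvage your approach, you should replace the Pitt claim for $\rho_s(p)$ with the surjectivity argument $\mathcal{F}V_s=H_s$, and defer all boundedness assertions about $\rho_s(p)$ to the unitarity theorem.
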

 
 \begin{proof}
 By Pitt's theorem, there exists a constant $C > 0$ such that for any $f \in V_s$
\begin{align*}
\int_{-\infty}^{\infty}|(\mathcal{F}{f})(x)|^2 \, |x|^{-s}dx &\le C\int_{-\infty}^{\infty}|f(x)|^2 \, |x|^{s}dx \\
&\le C\int_{-\infty}^{\infty}|f(x)|^2|1+x^2|^{s}dx,
\end{align*}
$0 < s < 1$.  Hence, the Fourier transform $\mathcal{F}$ is a continuous map from $V_s$ to $H_s$, and this shows that $\mathcal{F}V_s \subset H_s$.

Define the functions $\lambda, \mu:\R\to\R$ by  $\lambda(x) = e^{-x^2/2}$ and $ \mu(x) = xe^{-x^2/2}$.  
 Then it is a simple calculation to show that both $\lambda$ and $\mu$ are in $V_s$ and $H_s$, and also that $\mathcal{F}\lambda=\mu,\mathcal{F}\mu=-i\mu$.   Define a homomorphism $T$ of $\R^{\times}$, the multiplicative group of non-zero real numbers, by 
 $$
 (T(a)f)( x) = a^{1/2}f(ax)
 $$
 for any $a \in \R^{\times} $ and any function $f$ on $\R$.  Then for $f \in V_s$, we obtain the relation, 
 \begin{equation}
 \mathcal{F}T(a)=T(a^{-1})\mathcal{F}.
 \end{equation}
 By the uniqueness property of Laplace transform, it follows that if $f \in V_s$ is such that for any $a \in \R^{\times} $
 $$
 \int_{-\infty}^{\infty}f(x)(T(a)\lambda)(x)(1+x^2)^{s}dx=0
 $$
 and 
  $$
  \int_{-\infty}^{\infty}f(x)(T(a)\mu)(x)(1+x^2)^{s}dx=0
  $$
 then $f = 0$. Hence, $\hbox{Span}\!\left\{T(a)\lambda, T(a)\mu |a \in \R^{\times}\right\}$ is a dense subspace of $V_s$.  Similarly, the space generated by its image, {\it viz.}, $\hbox{Span}\!\left\{T(a^{-1})\lambda, T(a^{-1})\mu |a \in \R^{\times}\right\}$, is dense in $H_s$.  Therefore, it follows from the continuity of $\mathcal{F}$ that $\mathcal{F}V_s = H_s$.  
 
 Consequently, for any $f \in H_s$ and $g \in G$, we obtain 
${\mathcal{F}}^{-1}f \in V_s,$ $\pi_s (g){\mathcal{F}}^{-1}f\in V_s,$ and
$ \mathcal{F}\pi_s(g){\mathcal{F}}^{-1}f \in H_s.$
 Therefore, we have proved that $\rho_s(g ) = \mathcal{F}\pi_s(g){\mathcal{F}}^{-1}$  is well-defined for any $g \in G$.
 Finally, since $\pi_s$ is a representation of $G$ on $V_s$ then it follows immediately that $\rho_s$ also is a well-defined
 representation of $G$ on $H_s.$ 
 \end{proof}

\section {Unitarity of the complementary series}
\label{Proof}

 Throughout this section, we will use the notation defined in (\ref{qmatrix})-(\ref{gammamatrix}) for the  elements of the subgroups of $G$.  The main of this section is to establish the following result.

 \begin{thm}
 \label{main}
 For $s \in (0,1)$, the operator $\rho_s$ defined in ($\ref{rhos}$) is a unitary representation of $G$ on $H_s$.
  \end{thm}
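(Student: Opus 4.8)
The plan is to exploit Proposition \ref{NandD}: since $N$, $D$, and the Weyl element $p$ generate $G$ and $\rho_s$ is a homomorphism, it suffices to show that $\rho_s(n)$, $\rho_s(\gamma)$ (for $\gamma\in D$), and $\rho_s(p)$ are each unitary, a product of unitaries being unitary. The Borel part is routine. Reading off (\ref{special}), $\pi_s(n(c))$ is the translation $f(x)\mapsto f(x+c)$ and $\pi_s(\gamma(a,d))$ is the scaled dilation $f(x)\mapsto|d/a|^{(s+1)/2}f((d/a)x)$, depending only on $t=d/a$. Conjugating by $\mathcal{F}$ turns the translation into multiplication by the unimodular factor $e^{-ic\xi}$, which is unitary on $H_s$ for any weight, and turns the dilation into $\rho_s(\gamma)F(\xi)=|t|^{(s-1)/2}F(\xi/t)$, which a change of variables shows is an $H_s$-isometry because the weight defining $H_s$ is homogeneous of the matching degree. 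Hence $\rho_s$ is unitary on the Borel subgroup $Q=\langle N,D\rangle$.

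Everything then reduces to the unitarity of $\rho_s(p)$, and this is where Proposition \ref{RandL} enters. The structural input is that $p$ normalizes $D$ and swaps its diagonal entries, $p\,\gamma(a,d)\,p^{-1}=\gamma(d,a)$. Because $\rho_s|_D$ factors through the abelian group $\R^\times$ via $t=d/a$, the swap inverts the parameter, yielding the intertwining relation $\rho_s(p)\,R(t)=R(t^{-1})\,\rho_s(p)$, where $R(t):=\rho_s(\gamma(a,d))$ is the unitary dilation representation found above. With $L(t):=R(t^{-1})=R(t)^{-1}$ this says exactly that $\rho_s(p)$ intertwines $R$ and $L=R^\vee$ in the sense of Proposition \ref{RandL}. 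Moreover $p^2=-I$ and, since $\epsilon=0$, $\pi_s(-I)=I$; thus $\rho_s(p)^2=\rho_s(-I)=I$, so $\rho_s(p)$ is an involution, a candidate generalized Watson transform.

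To promote this candidate to a genuine unitary I would, on each of the even and odd subspaces $H_s^{+},H_s^{-}$ (both preserved by $R$ and $\rho_s(p)$, and on each of which the dilation group acts with multiplicity one), exhibit a vector $\phi$ that is simultaneously a $\pm1$-eigenvector of $\rho_s(p)$ and cyclic for $R$. Eigenvectors of $\rho_s(p)$ come from eigenvectors of $\pi_s(p)$ via $\mathcal{F}$: since $(\pi_s(p)f)(x)=|x|^{-(s+1)}f(-1/x)$, the substitution $f(x)=|x|^{-(s+1)/2}u(x)$ reduces the eigenvalue equation to $u(-1/x)=\pm u(x)$, solved explicitly by functions of the inversion-invariant $x-1/x$ (times $x\pm1/x$ to prescribe the parity). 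For real $\phi$ (even case) or purely imaginary $\phi$ (odd case) the quantity $\langle\phi\,|\,R(t)\phi\rangle$ is automatically real, so Proposition \ref{RandL} produces a unitary involution $W$ intertwining $R$ and $L$ with $W\phi=\pm\phi$. As $\rho_s(p)$ and $W$ then both intertwine $R$ and $L$ and agree on $\phi$, the identity $W R(t)\phi=R(t^{-1})\phi=\rho_s(p)R(t)\phi$ forces $W=\rho_s(p)$ on the dense orbit $\{R(t)\phi\}$, hence everywhere. Assembling the two parity pieces with the Borel computation gives unitarity of $\rho_s$ on $G$.

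The main obstacle I anticipate is the cyclicity in the step above: verifying that the explicit $\pi_s(p)$-eigenfunction can be chosen cyclic for the dilation subgroup on each parity subspace. This is a Wiener-type density statement, equivalently the nonvanishing of a Mellin transform, and is the analogue for these eigenfunctions of the spanning argument already carried out for the Gaussians $\lambda,\mu$ in the preceding Lemma. A secondary point requiring care is matching the sign of the eigenvalue $W\phi=\pm\phi$ delivered by Proposition \ref{RandL} with the eigenvalue of the chosen $\phi$ under $\rho_s(p)$, and confirming that the $\epsilon=0$ normalization indeed gives $\rho_s(p)^2=I$ rather than $-I$, so that we are in the Watson rather than the skew-Watson case.
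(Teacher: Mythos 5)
Your proposal follows essentially the same route as the paper: unitarity on $Q$ by direct computation after conjugating by $\mathcal{F}$, reduction to $\rho_s(p)$ via the generation of $G$ by $N$, $D$, $p$, and then Proposition \ref{RandL} applied on the even and odd subspaces to a cyclic $\rho_s(p)$-eigenvector for the dilation representation, with the identification of the resulting Watson transform with $\rho_s(p)$ on the dense orbit. The cyclicity obstacle you flag is exactly what the paper's Lemmas \ref{phi} and \ref{psis} supply, using the specific eigenvectors $\phi_s=\mathcal{F}\bigl[(1+y^2)^{-(s+1)/2}\bigr]$ and $\psi_s$ together with uniqueness of the Laplace transform.
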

  
Before embarking on the proof of Theorem \ref{main}, we shall establish several preliminary results.

  \begin{lem}
  \label{preserve}
  The subgroup $Q$ of upper-triangular subgroup preserves the norm of $H_s$ when $\rho_s$ is restricted to $Q$.
 \end{lem}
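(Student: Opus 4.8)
The plan is to exploit the factorization $Q = ND$. By the first identity recorded in the discussion following Proposition~\ref{NandD}, every element of $Q$ may be written as $q(a,c,d) = \gamma(a,d)\,n(c/a)$, and since $\rho_s$ is a homomorphism (by the preceding lemma) and a composition of isometries is again an isometry, it suffices to prove separately that $\rho_s(n(c))$ and $\rho_s(\gamma(a,d))$ preserve the $H_s$-norm. Throughout I would use the identity $\mathcal{F}V_s = H_s$ established in the preceding lemma, so that each $\rho_s(q) = \mathcal{F}\pi_s(q)\mathcal{F}^{-1}$ is genuinely defined on all of $H_s$.

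For the unipotent part, formula~(\ref{special}) specializes (with $a=d=1$ and $b=0$) to the translation $(\pi_s(n(c))f)(x) = f(x+c)$. Conjugating a translation by the Fourier transform produces multiplication by a unimodular exponential, so $\rho_s(n(c))$ acts on $H_s$ as $g(\xi)\mapsto e^{ic\xi}g(\xi)$. Multiplication by a function of modulus one preserves every weighted $L^2$-norm, and in particular the norm of $H_s$; hence $\rho_s(n(c))$ is an isometry. This step is immediate.

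The diagonal part is the crux. Here~(\ref{special}) specializes (with $b=c=0$) to the dilation $(\pi_s(\gamma(a,d))f)(x) = |a|^{-(s+1)/2}|d|^{(s+1)/2} f\bigl((d/a)x\bigr)$. Since the Fourier transform carries a dilation by $d/a$ to a dilation by $a/d$ together with the usual Jacobian factor, $\rho_s(\gamma(a,d))$ takes the form $g(\xi)\mapsto C(a,d)\,g\bigl((a/d)\xi\bigr)$ for an explicit scalar $C(a,d)$ built from powers of $|a|$ and $|d|$. I would then compute the $H_s$-norm of the right-hand side via the substitution $u=(a/d)\xi$; the factor $|C(a,d)|^2$, the Jacobian of the substitution, and the weight picked up from the substitution all combine, and the powers of $|a|$ and $|d|$ cancel identically, leaving $\|\rho_s(\gamma(a,d))g\|_{H_s}=\|g\|_{H_s}$.

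The one genuine obstacle is this exponent bookkeeping in the diagonal case. The point to verify is that the half-integer shift $(s+1)/2$ appearing in~(\ref{special}) is tuned precisely so that the dilation becomes isometric in the $H_s$-weight after conjugation by $\mathcal{F}$, even though, as noted in Section~\ref{Subgroup}, $\pi_s$ itself fails to be unitary on $V_s$ for real $s$. No analytic difficulty arises beyond tracking these powers; the unipotent contribution, and the central contribution (which acts trivially), are immediate by comparison.
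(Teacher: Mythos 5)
Your proposal is correct and is essentially the paper's own argument: the paper computes $\rho_s(q)=\mathcal{F}\pi_s(q)\mathcal{F}^{-1}$ directly for general $q=q(a,c,d)$, obtaining $(\rho_s(q)f)(x)=e^{icd^{-1}x}|a|^{(1-s)/2}|d|^{(s-1)/2}f(d^{-1}xa)$, which is exactly the unimodular multiplication composed with the weighted dilation you describe, and then concludes by the same change-of-variables check. Your factorization through $N$ and $D$ is only an organizational variant of that single computation.
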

 
 \begin{proof}
  Notice that if $q = q( a, c, d ) \in Q$ then, by $(\ref{special})$, 
  \begin{equation}
 (\pi_s(q)f)(x) = |a|^{-(s+1)/2}|d|^{(s+1)/2}f(a^{-1}(c+xd )).
  \end{equation}
  Applying the Fourier transform, we obtain 
 \begin{align*}
 (\rho_s(q)f)(x) &= (\mathcal{F}\pi_s(q)\mathcal{F}^{-1}f)(x) \\
 &= e^{icd^{-1}x}|a|^{(1-s)/2}|d|^{(s-1)/2}f(d^{-1}xa).
 \end{align*}
 Hence the result follows by a simple calculation.
 \end{proof}
 
Define
 $%\begin{equation}
 R(\gamma)=\rho_s(\gamma)
 $ %\end{equation} 
 for $\gamma= \gamma(a, d) \in D$. Then, it follows from the above lemma that $R$ is a unitary representation of $D$ on $H_s$ and
 $$%\begin{equation}
 (R(\gamma)f)(x) = |a|^{(1-s)/2}|d|^{(s-1)/2}f(d^{-1}xa).
 $$%\end{equation}
 
\begin{lem}
\label{phi}
 For $s \in \C$ such that $0< \Re(s) < 1$, define 
 \begin{equation}
 \phi_s(x)=\int_{-\infty}^{\infty} e^{-ixy} \, (1+y^2)^{-(s+1)/2} \, dy.
 \end{equation}
 Then $\phi_s \in H_s$, and the set ${\mathbf{\Phi}}^{\circ} = \{R(\gamma)\phi_s:\gamma \in D\}$ spans a dense subspace of $H_s^+ =\{f \in H_s: f \text{ is even}\}$.
\end{lem}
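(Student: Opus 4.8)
The plan is to reduce the density statement to a Wiener‑type totality criterion for translates, which in turn becomes the nonvanishing of a Mellin transform that can be evaluated in closed form.

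\emph{Membership.} Put $\psi_s(y)=(1+y^2)^{-(s+1)/2}$, so that $\phi_s=\mathcal F\psi_s$. Since $|\psi_s(y)|^2(1+y^2)^{\Re(s)}=(1+y^2)^{-1}$, we get $\int_{-\infty}^{\infty}|\psi_s(y)|^2(1+y^2)^{\Re(s)}\,dy=\pi$, i.e. $\psi_s\in V_s$. The preceding lemma (via Pitt's theorem) shows $\mathcal F\colon V_s\to H_s$ is bounded, whence $\phi_s\in H_s$. Moreover $\psi_s$ is even and all of its derivatives are integrable, so $\phi_s$ is even, bounded near the origin (with $\phi_s(0)=\int_{-\infty}^{\infty}\psi_s$), and rapidly decreasing at infinity.

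\emph{Reduction to translates.} For $\gamma=\gamma(a,d)\in D$ write $t=a/d\in\R^{\times}$; the displayed formula for $R$ gives $(R(\gamma)f)(x)=|t|^{(1-s)/2}f(tx)$, so (as $\phi_s$ is even) $\mathbf{\Phi}^{\circ}=\{|t|^{(1-s)/2}\phi_s(t\,\cdot\,):t>0\}$ spans the same subspace. In the logarithmic variable $x=e^u$ the map $(Uf)(u)=\sqrt2\,e^{(1-\Re(s))u/2}f(e^u)$ is a unitary isomorphism $H_s^{+}\to L^2(\R,du)$, and a direct computation gives $U\big[|t|^{(1-s)/2}\phi_s(t\,\cdot\,)\big]=|t|^{-i\Im(s)/2}\,(U\phi_s)(\,\cdot+\log t)$. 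The scalar $|t|^{-i\Im(s)/2}$ is unimodular, hence irrelevant to the span, so $\mathbf{\Phi}^{\circ}$ is total in $H_s^{+}$ if and only if the translates of $g_0:=U\phi_s$ are total in $L^2(\R)$. By the standard criterion (totality of the translates of $g_0$ is equivalent to $\widehat{g_0}\ne0$ a.e.), this reduces to showing $\widehat{g_0}(\xi)\ne0$ for a.e. $\xi$.

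\emph{Nonvanishing via the Mellin transform.} Undoing the substitution, $\widehat{g_0}(\xi)=\sqrt2\int_0^\infty\phi_s(x)\,x^{(1-\Re(s))/2-i\xi}\,\frac{dx}{x}$, so $\widehat{g_0}(\xi)$ is a nonzero multiple of the Mellin transform $\mathcal M[\phi_s](z)=\int_0^\infty\phi_s(x)x^{z-1}\,dx$ on the line $\Re(z)=(1-\Re(s))/2\in(0,\tfrac12)$; since $\phi_s(x)x^{\Re(z)-1}\sim x^{-(1+\Re(s))/2}$ is integrable at $0$ and $\phi_s$ decays rapidly at infinity, this integral converges absolutely there. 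By the classical formula for the Fourier transform of the Bessel potential, $\phi_s(x)=c_s\,|x|^{s/2}K_{s/2}(|x|)$ with $c_s\ne0$, and the known Mellin transform of the Macdonald function yields
\[
\mathcal M[\phi_s](z)=c_s\,2^{\,z+s/2-2}\,\Gamma\!\left(\frac{z}{2}\right)\Gamma\!\left(\frac{z+s}{2}\right),
\]
the same expression being obtainable elementarily from $\int_0^\infty\cos(xy)x^{z-1}\,dx=\Gamma(z)\cos\frac{\pi z}{2}\,y^{-z}$ and a Beta integral, followed by the reflection and duplication formulas. On the line $\Re(z)=(1-\Re(s))/2$ we have $\Re(z/2)=(1-\Re(s))/4>0$ and $\Re((z+s)/2)=(1+\Re(s))/4>0$, so both Gamma factors are finite and nonzero; since $\Gamma$ has no zeros and $2^{\,z+s/2-2}c_s\ne0$, the Mellin transform is nonvanishing on the entire line. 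Hence $\widehat{g_0}$ never vanishes, the translates of $g_0$ are total, and $\mathbf{\Phi}^{\circ}$ spans a dense subspace of $H_s^{+}$.

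The main obstacle is this last step: everything rests on the Mellin transform of $\phi_s$ being a product of Gamma functions, for then the absence of zeros of $\Gamma$ forces nonvanishing on the critical line. Identifying $\phi_s$ with a Macdonald function—or, equivalently, justifying the oscillatory Mellin computation by analytic continuation from a half‑plane of absolute convergence—is therefore the crux; verifying the rapid decay of $\phi_s$ and the resulting absolute convergence on $\Re(z)=(1-\Re(s))/2$ are the routine supporting points.
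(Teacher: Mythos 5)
Your proof is correct, but it takes a genuinely different route from the paper's. For membership you feed $\psi_s=(1+y^2)^{-(s+1)/2}\in V_s$ through the boundedness of $\mathcal F\colon V_s\to H_s$ already established via Pitt's theorem, whereas the paper estimates $\int|\phi_s|^2|x|^{-s}\,dx$ directly by splitting at $|x|=1$ and using $L^\infty$ and $L^2$ bounds; both are fine, and yours reuses earlier machinery more economically. For density, both arguments pivot on the same special-function identity — the subordination formula identifying $\phi_s$ up to a nonzero constant with $|x|^{s/2}K_{s/2}(|x|)$, which is exactly the paper's equation (\ref{Gammaequ}) — but they exploit it differently. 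The paper stays elementary: assuming $f\perp R(\gamma)\phi_s$ for all $\gamma$, it applies Fubini to produce an iterated Laplace transform equal to zero and invokes injectivity of the Laplace transform twice to conclude $f=0$. You instead pass to logarithmic coordinates, turning the dilation orbit into a translation orbit in $L^2(\R)$, invoke Wiener's $L^2$ totality criterion, and reduce everything to the nonvanishing of the Mellin transform of $\phi_s$ on the line $\Re(z)=(1-\Re(s))/2$, which the classical Gamma-product formula for $\mathcal M[K_\nu]$ settles since $\Gamma$ has no zeros. Your route requires the external inputs of Wiener's theorem and the Macdonald-function Mellin transform, but it buys a stronger and more transparent conclusion (the relevant Fourier transform is nonvanishing everywhere, not merely a.e.), sidesteps the Fubini/Laplace-uniqueness bookkeeping for a function known only to lie in $H_s$, and is the version of the argument that generalizes most readily. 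The convergence checks you flag (absolute convergence of the Mellin integral on the given line from the $x^{-( 1+\Re(s))/2}$ behavior at $0$ and exponential decay at infinity) are indeed the only routine points needing verification, and they hold.
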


\begin{proof}
Since $0 < \Re(s)< 1$ then the functions $(1+y^2)^{-(s+1)/2}$ and $(1+y^2)^{-(s+1)}$ are elements of $L^1(\R)$.  Therefore, by the $L^1$ and $L^2$ properties of the Fourier transform, it follows that $\phi_s \in L^2 (\R)\cap L^{\infty}(\R)$, and
 \begin{align*}
\langle\phi_s|\phi_s\rangle_s &= \int_{-\infty}^{\infty}|\phi_s(x)|^2|x|^{-s}dx \\
 &= \int_{|x|\le 1}|\phi_s(x)|^2|x|^{-s}dx +\int_{|x|>1}|\phi_s(x)|^2|x|^{-s}dx \\
 &\le \|\phi_s\|_{\infty} \int_{|x|\le 1}|x|^{-s}dx+\int_{|x|>1}|\phi_s(x)|^2dx\\
 &\le 2(1-s)^{-1} \|\phi_s\|_{\infty} + \|\phi_s\|_2^2.
 \end{align*}
 where $\|\phi_s\|_{\infty}$ and $\|\phi_s\|_2$ are the norms of $\phi_s$ in $L^{\infty} (\R )$ and $L^2 (\R)$, respectively.
 Therefore $\phi_s \in H_s$, and
 %\begin{equation}
 \begin{align*}
  %\label{Gammaequ}
 \phi_s(x) &= \int_{-\infty}^{\infty} e^{-ixy} \, (1+y^2)^{-(s+1)/2} \, dy \nonumber \\
 &= \frac{1}{\Gamma((1+s)/2)} \int_{-\infty}^{\infty} e^{-ixy} \left[\int_{0}^{\infty} \xi^{(s-1)/2} e^{-\xi(1+y^2)}d\xi\right]dy.
 \end{align*}
 As these integrals are absolutely convergent, we now apply Fubini's theorem to reverse the order of integration; then the inner integral with respect to $y$ is seen to be the Fourier transform of the Gaussian; on evaluating that integral we obtain  
% &= \frac{1}{\Gamma((1+s)/2)} \int_{0}^{\infty}\xi^{\frac{s-1}{2}}e^{-\xi}\left[\int_{-\infty}^{\infty}e^{-\xi y^2}e^{-ixy}dy\right]d\xi \nonumber \\
%\begin{equation}
 \begin{align}
  \label{Gammaequ}
 \phi_s(x) &= \frac{\sqrt{\pi}}{\Gamma((1+s)/2)} \int_{0}^{\infty}\xi^{(s/2)-1} e^{-\xi} e^{-x^2/4\xi}d\xi \nonumber \\
 &=\frac{\sqrt{\pi}}{\Gamma((1+s)/2)} \, |x/2|^{s/2}\int_{0}^{\infty} \xi^{(s/2)-1} \exp(-|x|(\xi+\xi^{-1})/2) d\xi.
 \end{align}
 %\end{equation}
% Note that since all of the above integrals are absolute convergent, we can use Fubini's theorem to change the order of integrals and accomplish the computation.

 In order to prove that $\Phi^{\circ}$ spans a dense subspace in $H_s^+$, it suffices to show that if $f \in H_s$ is even and is such that $\langle R(\gamma(a, 1))\phi_s |f\rangle_s= 0 $ for any $ a \in \R^{\times}$ then $f = 0$, almost everywhere.  It follows from the condition $\langle R(\gamma(a, 1))\phi_s |f\rangle = 0$ that 
 $$
 \int_{-\infty}^{\infty} |a|^{(1-s)/2}\phi_s(xa)\overline{f(x)}|x|^{-s}dx=0
 $$ 
 and that 
 $$
 \int_{-\infty}^{\infty}\phi_s(xa)\overline{f(x)}|x|^{-s}dx=0
 $$
 for all $a \in \R^{\times}$.  From (\ref{Gammaequ}), we have 
 $$
 \phi_s(xa) = \frac{\sqrt{\pi}}{\Gamma((1+s)/2)} |xa/2|^{s/2} \int_{0}^{\infty} \xi^{(s/2)-1} \exp(-|xa|(\xi+\xi^{-1})/2) d\xi,
 $$
 and replacing $\xi$ by $\xi/|x|$ in the latter integral, we obtain 
 $$
 \phi_s(xa) =\frac{\sqrt{\pi}}{\Gamma((1+s)/2)} |a|^{s} \int_{0}^{\infty} \xi^{\frac{s}{2}-1} \exp\Big(-|a|\xi-\frac{x^2}{4\xi}\Big) d\xi.
 $$
 Therefore, 
 $$
 \int_{-\infty}^{\infty}\overline{f(x)}|x|^{-s}\left[\int_{0}^{\infty} \xi^{\frac{s}{2}-1} \exp\Big(-|a|\xi-\frac{x^2}{4\xi}\Big)d\xi \right]dx=0
 $$
 for all $a \in \R^\times$.   Again applying Fubini's theorem to interchange the order of integration, we obtain 
  $$
  \int_{0}^{\infty} \xi^{(s/2)-1} \left[\int_{-\infty}^{\infty}\overline{f(x)}|x|^{-s} \exp(-x^2/4\xi) dx \right]e^{-|a| \xi}d\xi=0.
  $$
 As the latter integral is a Laplace transform, it follows that 
 %Hence, the Laplace transform of
 %$$
 %\xi^{\frac{s}{2}-1}\int_{-\infty}^{\infty}\overline{f(x)}|x|^{-s}e^{-\frac{x^2}{4\xi}}dx
 %$$
 %is identically zero. Since the Laplace transform is one to one, it follows that
 %$$
 %\xi^{\frac{s}{2}-1}\int_{-\infty}^{\infty}\overline{f(x)}|x|^{-s}e^{-\frac{x^2}{4\xi}}dx   \text{  a.e.  } \xi > 0.
 %$$
 %which is the same as
 $$
 \int_{-\infty}^{\infty}\overline{f(x)}|x|^{-s} \exp(-x^2/4\xi) dx = 0
 $$
  $\xi$-almost everywhere.  Since $f$ is even, it follows that
 $$
  \int_{0}^{\infty}\overline{f(x)}|x|^{-s} \exp(-x^2/4\xi) dx
  $$
  almost everywhere in $\xi > 0$; equivalently, 
  $$
   \int_{0}^{\infty}\overline{f(\sqrt{x})} |x|^{-(s+1)/2} \exp(-x/4\xi) dx = 0
   $$
 almost everywhere in $\xi > 0$.  Hence, the Laplace transform of $\overline{f(\sqrt{x})}|x|^{-(s+1)/2}$ is zero almost everywhere on $\R^{\times}$. Therefore
% $$\overline{f(\sqrt{x})}|x|^{-\frac{s+1}{2}}=0$$
% for almost every $x > 0$. Hence, 
$\overline{f(\sqrt{x})}=0$ for almost every $x > 0$, which implies that $f ( x ) = 0$, \text{  a.e.  } $x > 0$. Because $f$ is even, we deduce that $f ( x ) = 0$ a.e. This  completes the proof of the Lemma.
 \end{proof}
 
 \begin{lem}
\label{psis}
 For $0 < \Re(s) < 1$, define
 \begin{equation}
  \psi_s(x)=\int_{-\infty}^{\infty} ye^{-ixy} (1+y^2)^{-(s+1)/2} dy
 \end{equation}
 Then $\psi_s \in  H_s$, and the set ${\mathbf{\Psi}}^{\circ} = \left\{ R(\gamma)\psi_s : \gamma \in D\right\}$ spans a dense subspace of $H_s^- = \left\{f \in H_s : f \text{ is odd} \right\}$.
 \end{lem}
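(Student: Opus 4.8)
The plan is to follow the template of the proof of Lemma~\ref{phi} (the even case), making the modifications forced by the extra factor $y$ in the integrand. I would organize the argument into two stages: first, establish that $\psi_s\in H_s$ together with a Gamma-type integral representation analogous to \eqref{Gammaequ}; second, run the orthogonality/Laplace-uniqueness argument, now using the oddness of $f$ in place of evenness.

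For the first stage, the key observation is that multiplying the Fourier integrand by $y$ corresponds to differentiation on the transform side, so that $\psi_s = i\,\phi_s'$. Differentiating the representation \eqref{Gammaequ} under the integral sign (legitimate for $x\neq 0$, where the Gaussian factor controls the $\xi\to 0$ end) should yield
\[
\psi_s(x) = \frac{-ix\sqrt{\pi}}{2\,\Gamma((1+s)/2)}\int_0^\infty \xi^{(s/2)-2}\,e^{-\xi}\,e^{-x^2/4\xi}\,d\xi,
\]
which is manifestly an odd function of $x$. From this representation I would then seek to verify $\psi_s\in H_s$: the split-integral estimate over $\{|x|\le 1\}$ and $\{|x|>1\}$ used for $\phi_s$ handles the tail (decay at infinity again being governed by the analyticity of $(1+y^2)^{-(s+1)/2}$), while the contribution near the origin must be controlled using the precise asymptotics extracted from the representation.

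For the second stage I would argue as in Lemma~\ref{phi}. Since $\psi_s$ is odd, the hypothesis $\langle R(\gamma(a,1))\psi_s\,|\,f\rangle_s=0$ for all $a\in\R^\times$ reduces to $\int_{-\infty}^\infty \psi_s(tx)\,\overline{f(x)}\,|x|^{-s}\,dx=0$ for all $t>0$. Inserting the representation above and substituting $\xi=t^2\zeta$ exposes a clean Laplace transform in the variable $t^2$, with kernel $e^{-t^2\zeta}$ and an $a$-free Gaussian factor $e^{-x^2/4\zeta}$; after Fubini and the uniqueness theorem for the Laplace transform I obtain $\int_{-\infty}^\infty x\,\overline{f(x)}\,|x|^{-s}\,e^{-x^2/4\zeta}\,dx=0$ for almost every $\zeta>0$. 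Here the parity bookkeeping differs from the even case: because $f$ is odd, the factor $x\,\overline{f(x)}\,|x|^{-s}$ is \emph{even}, so the integral collapses to one over $(0,\infty)$; the substitution $u=x^2$ then converts the vanishing into that of the Laplace transform of $\overline{f(\sqrt{u})}\,u^{-s/2}$, and a second appeal to Laplace uniqueness gives $f=0$ almost everywhere on $(0,\infty)$, hence everywhere by oddness.

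The main obstacle is the first stage rather than the density mechanism. The extra factor $y$ makes $y(1+y^2)^{-(s+1)/2}$ only conditionally convergent as a Fourier integral (it is not integrable, and not even square-integrable for small $s$), so one cannot simply invoke the $L^1$–$L^2$ mapping properties of $\mathcal{F}$ as in Lemma~\ref{phi}. Consequently both the interpretation of the defining integral and the justification of differentiating \eqref{Gammaequ} under the integral sign require care near the origin, where $\psi_s$ is more singular than $\phi_s$; this near-origin analysis is the delicate point, and once a rigorous integral representation and the membership $\psi_s\in H_s$ are secured, the Laplace-uniqueness argument of the second stage is a faithful odd analogue of Lemma~\ref{phi} and proceeds essentially formally.
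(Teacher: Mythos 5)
The paper offers no argument here beyond the single sentence ``The proof of this result is similar to that of Lemma~\ref{phi},'' so your reconstruction is the only detailed account available. Your second stage is a faithful odd analogue of the density argument of Lemma~\ref{phi} and is sound modulo the first stage: the parity bookkeeping (with $f$ odd, $x\overline{f(x)}\,|x|^{-s}$ is even, so the integral collapses to $(0,\infty)$), the substitution exposing a Laplace kernel, and the two appeals to Laplace uniqueness all go through exactly as in the even case.

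The genuine gap is in your first stage, and it sits precisely where you flagged it --- but it is worse than a point requiring ``care'': the step fails. Your identities $\psi_s=i\phi_s'$ and the differentiated version of \eqref{Gammaequ} are correct, yet the asymptotics you propose to extract from that representation go the wrong way. Writing $\phi_s(x)=\tfrac{2\sqrt{\pi}}{\Gamma((1+s)/2)}(|x|/2)^{s/2}K_{s/2}(|x|)$ and using the small-argument expansion $(z/2)^{\nu}K_{\nu}(z)=\tfrac12\Gamma(\nu)+\tfrac12\Gamma(-\nu)(z/2)^{2\nu}+O(z^{2})$ with $\nu=s/2$, one finds $\psi_s(x)=i\phi_s'(x)\sim c\,\mathrm{sgn}(x)\,|x|^{s-1}$ as $x\to 0$ with $c\neq 0$; equivalently, the integrand $y(1+y^2)^{-(s+1)/2}$ has the non-integrable tail $\mathrm{sgn}(y)|y|^{-s}$, whose Fourier transform is a nonzero multiple of $\mathrm{sgn}(x)|x|^{s-1}$, while the remainder is in $L^1$ and contributes only a bounded term. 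Consequently $|\psi_s(x)|^2|x|^{-s}\sim |x|^{s-2}$ near the origin, which is not integrable for any $s\in(0,1)$, so $\psi_s\notin H_s=L^2(\R,|x|^{-s}\,dx)$. The split-integral estimate of Lemma~\ref{phi} is unavailable exactly because it rested on $\phi_s\in L^{\infty}(\R)$, which is what the extra factor of $y$ destroys. So the membership claim $\psi_s\in H_s$ cannot be secured by any refinement of your stage one; as written, the first assertion of the lemma itself fails, and some renormalization of $\psi_s$, of the inner product on $H_s^-$, or a restriction on $s$ would be needed before your (otherwise correct) stage-two argument can be run.
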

 
 The proof of this result is similar to that of Lemma \ref{phi}.
 
 \medskip
 
 \noindent{\it Proof of Theorem \ref{main}}. By Lemma \ref{preserve} it follows that $\rho_s$ is unitary when restricted to $Q$.  Also, by Lemma \ref{Subgroup}, we need to prove that $\rho_s(p)$ is a unitary operator on $H_s$.  
 
 Denoting $\rho_s(p)$ by $W$, it is a straightforward calculation to verify the following properties for $W$:

\smallskip
\noindent
(i)  $W^2 = I$, the identity operator of $H_s$.

\smallskip
\noindent
(ii)  $WR(\gamma)= R(\gamma^{-1})W$ for $\gamma= \gamma(a, d) \in D$.

\smallskip
\noindent
(iii)  $W\phi_s =\phi_s$.

\smallskip
\noindent
It is also a simple calculation to verify that $\int_{-\infty}^{\infty}\phi_s(x)\overline{(R(\gamma)\phi_s)(x)}|x|^{-s}$ is real
 for any $\gamma \in D$. Hence by Proposition \ref{RandL} and Lemma \ref{phi}, $W$ is a generalized Watson transform of $H_s^+$ with respect to the unitary representations $R(\gamma)$ and $R(\gamma^{-1})$.  Therefore, $W$ is unitary on $H_s^+$. 
 
 Similarly, from Proposition \ref{RandL} and Lemma \ref{psis}, it follows that $W$ is unitary on $H_s^-$. Consequently, $W$
 is unitary on $H_s = H_s^+ \oplus H_s^-$. This completes the proof of the unitarity of the complementary series of $G$. 
 \qed

\section*{Acknowledgements}
I wish to express my deep gratitude to my research advisors, the late Professor Kenneth I. Gross (University of Vermont) and the late Professor Ray A. Kunze (University of Georgia), who introduced me to the subject of group representations and suggested that I study the generalized Watson transforms. I also thank Donald Richards (Penn State University) for his continual patience and encouragement. Their help and support have been invaluable.

\end{document}